\begin{document}

\newtheorem{theorem}{Theorem}[section]
\newtheorem{lemma}[theorem]{Lemma}
\newtheorem{proposition}[theorem]{Proposition}
\newtheorem{corollary}[theorem]{Corollary}

\theoremstyle{definition}
\newtheorem{definition}[theorem]{Definition}
\newtheorem{example}[theorem]{Example}
\newtheorem{xca}[theorem]{Exercise}
\newtheorem{notation}[theorem]{Notation}

\theoremstyle{remark}
\newtheorem{remark}[theorem]{Remark}

\numberwithin{equation}{section}

\newcommand{\A}{\mathcal A}
\newcommand{\E}{ {\it E} }
\newcommand{\tr}{\text{\it{tr}}}
\newcommand{\Tr}{\text{\it{Tr}}}
\newcommand{\ff}{\varphi}
\newcommand{\CC}{\mathbb C}
\newcommand{\RR}{\mathbb R}
\newcommand{\NN}{\mathbb N}
\newcommand{\cP}{\mathcal{P}}
\newcommand{\LzRn}{{L^2(\RR_+^n)}}
\newcommand{\LzRm}{{L^2(\RR_+^m)}}
\newcommand{\LzRz}{{L^2(\RR_+^2)}}
\newcommand{\FF}{\mathbb F}
\newcommand{\la}{\langle}
\newcommand{\ra}{\rangle}

\newcommand{\kk}{\kappa}
\newcommand{\oseins}{\overset{1}\frown}
\newcommand{\tensor}{\otimes}

\begin{center}
{\Large{\bf Multidimensional semicircular limits on the free Wigner chaos}}
\normalsize
\\~\\ by Ivan Nourdin\footnote{Universit\'e Nancy 1, France. Email: inourdin@gmail.com}, 
Giovanni Peccati\footnote{Universit\'e du Luxembourg, Luxembourg. Email: giovanni.peccati@gmail.com}, and 
Roland Speicher\footnote{Universit\"at des Saarlandes, Germany. Email: speicher@math.uni-sb.de}
\end{center}

\bigskip

{\small \noindent {{\bf Abstract}: We show that, for sequences of vectors of multiple 
Wigner integrals with respect to a free Brownian motion, 
componentwise convergence to semicircular is equivalent to joint convergence. 
This result extends to the free probability setting some findings by Peccati and Tudor (2005), and represents a 
multidimensional counterpart of a limit theorem inside the free Wigner chaos established by 
Kemp, Nourdin, Peccati and Speicher (2011).}
\\

\noindent {\bf Key words}: Convergence in Distribution; Fourth Moment Condition; Free Brownian Motion; Free Probability; Multidimensional Limit Theorems; Semicircular Law; Wigner Chaos.  \\

\noindent {\bf 2000 Mathematics Subject Classification:} 46L54, 60H05, 60H07, 60H30.

\section{Introduction}

Let $W=\{W_t :t\geq 0\}$ be a one-dimensional standard Brownian motion (living on some probability space 
$(\Omega, \mathscr{F}, P)$). For every $n\geq 1$ and every real-valued, symmetric and square-integrable function $f\in 
L^2(\mathbb{R}_+^n)$, we denote by $I^W(f)$ the multiple Wiener-It\^o integral of $f$, with respect to $W$. Random variables of 
this type compose the so-called $n$th {\it Wiener chaos} associated with $f$. In an infinite-dimensional setting, the concept 
of Wiener chaos plays the same role as that of the Hermite polynomials for the one-dimensional Gaussian distribution, and 
represents one of the staples of modern Gaussian analysis (see e.g. \cite{Janson, NP-book, nualart-book, PecTaqbook} for an 
introduction to these topics).

In recent years, many efforts have been made in order to characterize Central Limit Theorems (CLTs) -- that is, limit theorems 
involving convergence in distribution to a Gaussian element -- for random variables living inside a Wiener chaos.  The following 
statement gathers the main findings of \cite{nunugio} (Part 1) and \cite{pectud} (Part 2), and provides a complete 
characterization of (both one- and multi-dimensional) CLTs on the Wiener chaos.

\begin{theorem}[See \cite{nunugio, pectud}]\label{t:NPPT} 

\begin{itemize}
\item[\rm (A)]Let $F_k = I^W(f_k)$, $k\geq 1$, be a sequence of multiple integrals of order $n\geq 2$, such that 
$E[F_k^2]\to 1$. Then, the following two assertions are equivalent, as $k\to \infty$: {\rm\bf (i)} $F_k$ converges in 
distribution to a standard Gaussian random variable $N\sim \mathscr{N}(0,1)$; {\rm\bf (ii)} $E[F_k^4]\to 
3 =E[N^4]$. 

\item[\rm (B)] Let $d\geq 2$ and $n_1,...,n_d$ be integers, and let $(F_k^{(1)},...,F_k^{(d)})$, $k\geq 1$, be a sequence of 
random vectors such that, for every $i=1,...,d$, the random variable $F_k^{(i)}$ lives in the $n_i$th Wiener chaos of $W$. 
Assume that, as $k\to \infty$ and for every $i,j=1,...,d$, $E[F_k^{(i)}F_k^{(j)}]\to c(i,j)$, where $c = \{c(i,j) : 
i,j=1,...,d\}$ is a positive definite symmetric matrix. Then, the following two assertions are equivalent, as $k\to \infty$: 
{\rm\bf (i)} $(F_k^{(1)},...,F_k^{(d)})$ converges in distribution to a centered $d$-dimensional Gaussian vector 
$(N_1,...,N_d)$ with covariance $c$; {\rm\bf (ii)} for every $i=1,...,d$, $F_k^{(i)}$ converges in distribution to a centered 
Gaussian random variable with variance $c(i,i)$.
\end{itemize}
\end{theorem}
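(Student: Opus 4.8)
\medskip
\noindent\textbf{Proof strategy.}
The plan is to reduce both parts to the asymptotics of the \emph{contraction kernels} $f_k\otimes_r f_k$ --- and, in the multivariate setting, of the cross-contractions $f_k^{(i)}\otimes_r f_k^{(j)}$ --- where $\otimes_r$ is the $r$-fold contraction of square-integrable kernels and $\widetilde{\,\cdot\,}$ denotes canonical symmetrization. Two structural inputs are used throughout: the multiplication formula $I_n^W(f)\,I_m^W(g)=\sum_{r=0}^{\min(n,m)}r!\binom{n}{r}\binom{m}{r}\,I_{n+m-2r}^W(f\widetilde\otimes_r g)$, which converts products of chaotic random variables into sums of chaotic random variables indexed by contractions; and hypercontractivity, that is, the equivalence of all $L^p(\Omega)$-norms on a fixed Wiener chaos. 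The latter makes $\sup_k E\big[|F_k^{(i)}|^{2p}\big]<\infty$ automatic for every $p\geq1$ once the second moments stay bounded, so that tightness and the method of moments become available.

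\medskip
\noindent For Part (A) (with $n\ge2$), squaring the multiplication formula with $m=n$ and using orthogonality of distinct chaoses produces, after isolating the extreme terms $r=0$ and $r=n$, an exact identity of the form $E[F_k^4]-3\,E[F_k^2]^2=\sum_{r=1}^{n-1}c_{n,r}\,\beta_{k,r}$, with $c_{n,r}>0$ and $\beta_{k,r}$ a nonnegative combination of $\|f_k\otimes_r f_k\|^2$ and $\|f_k\widetilde\otimes_r f_k\|^2$; in particular $E[F_k^4]\ge3\,E[F_k^2]^2$ always. As $\|f_k\widetilde\otimes_r f_k\|\le\|f_k\otimes_r f_k\|$, assertion (ii) is therefore \emph{equivalent} to $\|f_k\otimes_r f_k\|\to0$ for all $r=1,\dots,n-1$. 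The delicate half is the converse: this vanishing together with $E[F_k^2]\to1$ should force $F_k\to N\sim\mathscr N(0,1)$ in law. I would establish it by either of two routes. \emph{(i) Moments}: the diagram formula writes $E[F_k^m]$ as a finite sum in which every term not contributing to the Gaussian moment $E[N^m]$ is dominated by a product of factors $\|f_k\otimes_r f_k\|$, $1\le r\le n-1$, hence vanishes; with the uniform moment bounds above and the moment-determinacy of $N$, this yields convergence in distribution. \emph{(ii) Malliavin--Stein}: one shows $\tfrac1n\|DF_k\|^2\to1$ in $L^2(\Omega)$ --- its variance being again a positive combination of the $\|f_k\otimes_r f_k\|^2$ --- and invokes Stein's bound $d_{\mathrm{TV}}(F_k,N)\le C\,E\big|1-\tfrac1n\|DF_k\|^2\big|$, $D$ being the Malliavin derivative. (For $n=1$ the statement is vacuous, as $I_1^W(f)$ is exactly Gaussian.)

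\medskip
\noindent For Part (B), (i)$\Rightarrow$(ii) is immediate since marginals of convergent vectors converge. For (ii)$\Rightarrow$(i), first apply Part (A) coordinatewise: when $n_i\ge2$, the Gaussian convergence of $F_k^{(i)}$ gives $\|f_k^{(i)}\otimes_r f_k^{(i)}\|\to0$ for $1\le r\le n_i-1$. The key step is a \emph{transfer}: these diagonal estimates control the cross-contractions. Indeed, the identity $\|f\otimes_r g\|^2=\la f\otimes_{n-r}f,\,g\otimes_{m-r}g\ra$ (for $f\in L^2(\RR_+^n)$, $g\in L^2(\RR_+^m)$, $1\le r\le\min(n,m)$), together with Cauchy--Schwarz and the uniform bound $\|f_k^{(i)}\otimes_s f_k^{(i)}\|\le\|f_k^{(i)}\|^2$, shows that every cross-contraction $f_k^{(i)}\otimes_r f_k^{(j)}$ vanishes in the limit, save the ``top'' one $f_k^{(i)}\otimes_{n_i}f_k^{(j)}$ arising only when $n_i=n_j$, whose squared norm $\la f_k^{(i)},f_k^{(j)}\ra^2$ converges by hypothesis; moreover $c(i,j)=0$ whenever $n_i\ne n_j$ (distinct chaoses being orthogonal). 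Granting this, joint convergence to $(N_1,\dots,N_d)$ follows either by the multivariate method of moments --- in the diagram expansion of $E\big[\prod_i(F_k^{(i)})^{m_i}\big]$ only the Gaussian pairings survive in the limit, rebuilding the mixed Gaussian moments of $(N_1,\dots,N_d)$ via Wick's formula from the covariance $c$ alone --- or by the multivariate Malliavin--Stein criterion, which reduces the claim to $\la DF_k^{(i)},DF_k^{(j)}\ra\to c(i,j)$ in $L^2(\Omega)$, an inner product whose mean is $E[F_k^{(i)}F_k^{(j)}]\to c(i,j)$ and whose fluctuations are governed by the cross-contractions just shown to vanish.

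\medskip
\noindent I expect the main obstacle to lie in two places. For Part (A): the converse implication, i.e. upgrading the analytic datum ``all intermediate contractions vanish'' into a genuine distributional limit, which demands either careful diagram bookkeeping (isolating the Gaussian pairings and dominating the rest) or the full Malliavin--Stein machinery (the variance computation for $\|DF_k\|^2$ plus Stein's inequality). For Part (B): the transfer step linking diagonal and off-diagonal contractions, since componentwise convergence says nothing a priori about the joint structure carried by the cross-kernels. With both in hand, the rest is routine, if sometimes lengthy, computation; this is essentially the route of \cite{nunugio} and \cite{pectud}.
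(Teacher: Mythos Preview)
The paper does not contain a proof of this statement: Theorem~\ref{t:NPPT} is stated purely as background, with both parts attributed to the references \cite{nunugio} (Part~(A)) and \cite{pectud} (Part~(B)), and no argument is given or sketched anywhere in the text. There is therefore nothing in the paper to compare your proposal against.

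That said, your outline is a faithful and correct summary of the proofs in the cited references. The reduction of Part~(A) to the vanishing of the intermediate contractions $f_k\otimes_r f_k$ via the product formula, the two alternative routes (diagram/moment method versus Malliavin--Stein) for the converse, and, for Part~(B), the transfer from diagonal to off-diagonal contractions through the identity $\|f\otimes_r g\|^2=\langle f\otimes_{n-r}f,\,g\otimes_{m-r}g\rangle$ followed by Cauchy--Schwarz --- all of this is exactly the machinery of \cite{nunugio} and \cite{pectud}. Incidentally, this same cross-contraction transfer is precisely what the present paper adapts to the free setting in Proposition~\ref{prop:vanishingintegrals}, so your sketch also anticipates the structure of the paper's own main argument for Theorem~\ref{t:main}.
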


Roughly speaking, Part (B) of the previous statement means that, for vectors of random variables living inside some fixed 
Wiener chaoses, {\it componentwise convergence to Gaussian always implies joint convergence}. The combination of Part (A) 
and Part (B) of Theorem \ref{t:NPPT} represents a powerful simplification of the so-called `method of moments and cumulants' 
(see e.g. \cite[Chapter 11]{PecTaqbook} for a discussion of this point), and has triggered a considerable number of 
applications, refinements and generalizations, ranging from Stein's method to analysis on homogenous spaces, random matrices 
and fractional processes -- see the survey \cite{NP-survey} as well as the forthcoming monograph \cite{NP-book} for details 
and references. 

Now, let $(\mathscr{A},\ff)$ be a non-commutative tracial $W^*$-probability space (in particular, $\mathscr{A}$ is a von 
Neumann algebra and $\varphi$ is a trace -- se Section \ref{ss:free} for details), and let $S = \{S_t : t\geq 0\}$ be a 
free Brownian motion defined on it. It is well-known (see e.g. \cite{BianeSpeicher}) that, for every $n\geq 1$ and every 
$f\in L^2(\mathbb{R}_+^n)$, one can define a free multiple stochastic integral with respect to $f$. Such an object is usually 
denoted by $I^S(f)$. Multiple integrals of order $n$ 
with respect to $S$ compose the so-called $n$th {\it Wigner chaos} associated with $S$. Wigner chaoses play a fundamental role 
in free stochastic analysis -- see again \cite{BianeSpeicher}.

The following theorem, which is the main result of \cite{KNPS}, is the exact free analogous of Part (A) of Theorem \ref{t:NPPT}.
Note that the value $2$ coincides with the fourth moment of the standard semicircular distribution $S(0,1)$.

\begin{theorem}[See \cite{KNPS}]\label{t:KNPS}
Let $n\ge2$ be an integer, and let $(f_k)_{k\in\mathbb{N}}$ be a sequence of mirror symmetric (see Section \ref{ss:mirror} 
for definitions) functions in $L^2(\mathbb{R}_+^n)$, each with $\|f_k\|_{L^2(\mathbb{R}_+^n)} = 1$.  
The following statements are equivalent.
\begin{itemize}
\item[(1)] The fourth moments of the stochastic integrals $I(f_k)$ converge to $2$, that is,
\[ \lim_{k\to\infty} \ff(I^S(f_k)^4) = 2. \]
\item[(2)] The random variables $I^S(f_k)$ converge in law to the standard semicircular distribution $S(0,1)$ as $k\to\infty$.
\end{itemize}
\end{theorem}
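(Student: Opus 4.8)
\medskip

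\noindent\textbf{Proof proposal.} The plan is to treat the two implications separately, the direction $(2)\Rightarrow(1)$ being essentially immediate. Indeed, each $I^S(f_k)$ is self-adjoint because $f_k$ is mirror symmetric, the operators $I^S(f_k)$ are uniformly bounded (one has the standard estimate $\|I^S(f)\|\le (n+1)\|f\|_{L^2(\RR_+^n)}$, so $\|I^S(f_k)\|\le n+1$), and the limit law $S(0,1)$ is compactly supported; hence convergence in distribution is equivalent to convergence of all moments, and in particular $\ff(I^S(f_k)^4)\to\ff(\sigma^4)=2$ for $\sigma\sim S(0,1)$. All the work is in the converse, $(1)\Rightarrow(2)$.

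The first step of $(1)\Rightarrow(2)$ is to convert the fourth-moment hypothesis into decay of contractions. Using the product formula for Wigner integrals,
\[
I^S(f_k)\,I^S(f_k) = \sum_{p=0}^{n} I^S(f_k \oseins^p f_k),
\]
squaring once more, using that $\ff(I^S(g))$ vanishes unless $g$ has order $0$, and using that each $f_k\oseins^p f_k$ is again mirror symmetric (so that its full self-contraction is $\|f_k\oseins^p f_k\|_{L^2}^2$), one obtains
\[
\ff\big(I^S(f_k)^4\big) = 2\|f_k\|_{L^2}^4 + \sum_{p=1}^{n-1}\|f_k \oseins^p f_k\|_{L^2(\RR_+^{2n-2p})}^2 = 2 + \sum_{p=1}^{n-1}\|f_k \oseins^p f_k\|^2.
\]
Thus hypothesis (1) is exactly equivalent to $\|f_k\oseins^p f_k\|_{L^2}\to0$ for every $p=1,\dots,n-1$.

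The second step, which is the heart of the matter, is to show that this decay propagates to all moments: for every integer $m\ge1$,
\[
\lim_{k\to\infty}\ff\big(I^S(f_k)^m\big) = \beta_m := \begin{cases} C_{m/2} & \text{if }m\text{ is even}\\ 0 & \text{if }m\text{ is odd}\end{cases}
\]
where $C_j$ denotes the $j$th Catalan number, so that the $\beta_m$ are precisely the moments of $S(0,1)$; combined with uniform boundedness, the moment method then yields $I^S(f_k)\to S(0,1)$ in law. To establish the moment convergence I would iterate the product formula to write $I^S(f)^m$ as a finite sum of single Wigner integrals indexed by ``contraction patterns'' (successive choices of contraction orders), apply $\ff$ to retain only the order-zero patterns, and argue that these split into two families. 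The \emph{principal} family consists of the patterns built entirely from iterated \emph{full} pair-contractions arranged in a non-crossing way; with respect to a fixed iteration scheme these are in bijection with the non-crossing pair partitions of $\{1,\dots,m\}$, there are $C_{m/2}$ of them, and each contributes exactly $\|f_k\|_{L^2}^{m}=1$. Every remaining order-zero pattern involves, at some stage, a \emph{partial} contraction, and the claim is that — after repeated Cauchy--Schwarz and the elementary bound $\|g\oseins^r h\|_{L^2}\le\|g\|_{L^2}\|h\|_{L^2}$, sharpened so as to peel off a factor $\|f\oseins^p f\|_{L^2}$ with $1\le p\le n-1$ — such a term is dominated, uniformly in $k$ and in the pattern, by a constant depending only on $m$ and $n$ times a positive power of $\max_{1\le p\le n-1}\|f_k\oseins^p f_k\|_{L^2}$, hence vanishes as $k\to\infty$. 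A convenient way to organize this is an induction on $m$.

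The step I expect to be the main obstacle is precisely this combinatorial control: taming the proliferation of terms produced by iterating the product formula, identifying cleanly which order-zero patterns are principal, and proving the uniform domination of every non-principal term by a nontrivial self-contraction norm. The difficulty is compounded by the fact that the chaos order of the intermediate functions changes with each contraction, so the induction cannot be run at a single fixed order; one must propagate, along the whole family of functions obtainable from $f$ by successive contractions, a bound on their relevant partial self-contractions in terms of $\max_{1\le p\le n-1}\|f\oseins^p f\|_{L^2}$. Everything else — the product formula, self-adjointness from mirror symmetry, the operator-norm bound securing uniform boundedness, and the passage from convergence of moments to convergence in law — is standard free stochastic analysis.
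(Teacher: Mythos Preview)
The present paper does not contain a proof of this statement: Theorem~\ref{t:KNPS} is quoted verbatim as the main result of \cite{KNPS} and is used here as a black box (for instance, in the proof of Theorem~\ref{t:main} the equivalence $(2)\Leftrightarrow(3)$ and the passage from~(3) to the vanishing of the non-trivial self-contractions are both attributed to \cite{KNPS}). So there is no in-paper proof to compare your proposal against.

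That said, your outline is correct in spirit, and the fourth-moment identity you derive is right. The organization, however, differs from what \cite{KNPS} actually does, as one can infer from the tools of \cite{KNPS} invoked in Section~\ref{s:proofs}. Instead of iterating the binary product formula and running an induction on $m$, the argument computes $\varphi\big(I^S(f)^m\big)$ in one stroke via the pairing-integral formula
\[
\varphi\big(I^S(f)^m\big)=\sum_{\pi\in NC_2(n\otimes\cdots\otimes n)}\int_\pi f^{\otimes m},
\]
decomposes each $\pi$ into its connected components, and then uses a direct estimate (Lemma~2.1 and Proposition~2.2 of \cite{KNPS}; Proposition~\ref{prop:vanishingintegrals} above is the multivariate version) bounding any connected pairing integral over $r\ge 3$ blocks by a constant times some $\|f\overset{p}{\frown}f\|$ with $1\le p\le n-1$. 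This bypasses precisely the obstacle you flag: there is no induction with drifting chaos orders, because the bound is established once for an arbitrary connected non-crossing pairing by locating two adjacent blocks joined by a non-trivial contraction and estimating all remaining factors by $L^2$ norms. Your iterated-product-formula route can be pushed through, but it amounts to rediscovering the same combinatorics in a less transparent packaging; the pairing-integral formulation makes the separation into ``principal'' and ``non-principal'' terms (your terminology) immediate, since the principal ones are exactly those $\pi$ whose connected components each span two $n$-blocks.

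A small notational slip: in this paper the macro \texttt{\textbackslash oseins} is defined as $\overset{1}{\frown}$, so writing $f\oseins^{\,p} f$ does not typeset the $p$th contraction; use $f\overset{p}{\frown} f$.
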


The aim of this 
paper is to provide a complete proof of the following Theorem \ref{t:main}, which represents a free analogous of Part (B) of 
Theorem \ref{t:NPPT}. 

\begin{theorem}\label{t:main}
 Let $d\geq 2$ and $n_1,\dots,n_d$ be some fixed integers, and consider a positive definite symmetric matrix 
$c =\{c(i,j) : i,j=1,...,d\}$. Let $(s_1,\dots,s_d)$ be a semicircular family with covariance $c$ (see Definition 
\ref{d:semifam}). For each $i=1,\dots,d$, we consider a sequence $(f_k^{(i)})_{k\in\NN}$ of mirror-symmetric functions in $L^2(\RR^{n_i}_+)$ such that, for all $i,j=1,\dots,d$,
\begin{equation}\label{thm:assumption}
\lim_{k\to\infty}\varphi[I^S(f_k^{(i)})I^S(f_k^{(j)})] = c(i,j).
\end{equation}
The following three statements are equivalent as $k\to\infty$.\\
(1) The vector $((I^S(f_k^{(1)}),\dots,I^S(f_k^{(d)}))$ converges in distribution to $(s_1,\dots,s_d)$.\\
(2) For each $i=1,\dots,d$, the random variable $I^S(f_k^{(i)})$ converges in distribution to $s_i$.\\
(3) For each $i=1,\dots,d$,
$$\lim_{k\to\infty}\varphi[ I^S(f_k^{(i)})^4]=2\,c(i,i)^2.$$
\end{theorem}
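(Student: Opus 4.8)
The implications $(1)\Rightarrow(2)$ is trivial (marginals of a convergent vector converge), and $(2)\Leftrightarrow(3)$ for each fixed $i$ is exactly Theorem~\ref{t:KNPS} applied to the normalized sequence $f_k^{(i)}/\|f_k^{(i)}\|$, using the assumption~\eqref{thm:assumption} with $i=j$ to control the norms. So the only real content is $(2)\Rightarrow(1)$, and since free convergence in distribution is governed by convergence of mixed moments $\varphi[I^S(f_k^{(i_1)})\cdots I^S(f_k^{(i_r)})]$, what must be shown is: if each marginal converges to a semicircular, then all mixed moments converge to the corresponding mixed moments of the semicircular family $(s_1,\dots,s_d)$, i.e. to the sum over non-crossing pair partitions respecting the covariance $c$.

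The strategy is the free analogue of the Peccati--Tudor argument combined with the ``transfer principle'' philosophy of Nourdin--Peccati: reduce everything to $L^2$-estimates on contractions. Recall the product formula for Wigner integrals, $I^S(f)I^S(g)=\sum_{p=0}^{n\wedge m} I^S(f\oseins^p g)$ where $f\oseins^p g$ denotes the $p$-th arc/nested contraction; iterating this expresses any mixed moment $\varphi[I^S(f_k^{(i_1)})\cdots I^S(f_k^{(i_r)})]$ as a sum of $\varphi$ of iterated contractions, and by traciality and orthogonality of Wigner integrals of different orders only the fully-contracted ``scalar'' terms survive. These scalar terms are indexed by non-crossing pairings on the ``chaos grid'', and the pairings decompose into two types: those that only pair adjacent blocks (these produce exactly the $c(i,j)$'s in the limit, by the two-variable case of~\eqref{thm:assumption}) and those involving a genuine contraction $f_k^{(i)}\oseins^p f_k^{(j)}$ with $0<p<n_i$. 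So the heart of the matter is to prove that all such nontrivial contractions vanish in $L^2$: for every $i\ne j$ (and also the ``self'' contractions with $i=j$), $\|f_k^{(i)}\oseins^p f_k^{(j)}\|_{L^2}\to 0$ as $k\to\infty$ for each $0<p<n_i\wedge n_j$.

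Here is where I would deploy the known one-dimensional input. By the equivalence $(2)\Leftrightarrow(3)$ and the analysis in~\cite{KNPS}, convergence of $I^S(f_k^{(i)})$ to a semicircular law is itself equivalent to the vanishing of all nontrivial self-contractions $\|f_k^{(i)}\oseins^p f_k^{(i)}\|_{L^2}\to 0$ for $0<p<n_i$; so the self-contraction case is already in hand. For the cross terms, I would use a polarization/Cauchy--Schwarz trick: estimate $\|f_k^{(i)}\oseins^p f_k^{(j)}\|_{L^2}$ by a geometric-mean type bound in terms of $\|f_k^{(i)}\oseins^q f_k^{(i)}\|_{L^2}$ and $\|f_k^{(j)}\oseins^{q'} f_k^{(j)}\|_{L^2}$ for appropriate $q,q'$. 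Concretely, writing the contraction as a Hilbert--Schmidt-type pairing and applying Cauchy--Schwarz in the ``free'' variables, one bounds the squared norm of a mixed contraction by a product of norms of self-contractions (this is the free analogue of the inequality $\|f\otimes_p g\|^2 \le \|f\otimes_p f\| \cdot \|g\otimes_p g\|$ used in~\cite{pectud}); since the right-hand side tends to $0$ by the one-dimensional hypotheses, so does the left-hand side.

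With all nontrivial contractions (self and mixed) shown to vanish in $L^2$, the final assembly is routine: expand an arbitrary mixed moment $\varphi[I^S(f_k^{(i_1)})\cdots I^S(f_k^{(i_r)})]$ via the iterated product formula, observe that every term containing at least one nontrivial contraction has $\varphi$ bounded (via $|\varphi(I^S(h))|$-type estimates and repeated Cauchy--Schwarz in $L^2(\RR_+^\bullet)$) by a quantity going to $0$, and that the surviving fully-paired terms converge by~\eqref{thm:assumption} to precisely $\sum_{\pi} \prod_{\{a,b\}\in\pi} c(i_a,i_b)$ summed over non-crossing pair partitions $\pi$ of $\{1,\dots,r\}$ — which is the definition of the mixed moments of the semicircular family $(s_1,\dots,s_d)$ (vanishing when $r$ is odd). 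Hence the vector converges in distribution to $(s_1,\dots,s_d)$, giving $(2)\Rightarrow(1)$.

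The main obstacle I anticipate is the cross-contraction estimate: making rigorous the bound $\|f_k^{(i)}\oseins^p f_k^{(j)}\|_{L^2}^2 \lesssim \|f_k^{(i)}\oseins^{q}f_k^{(i)}\|_{L^2}\cdot\|f_k^{(j)}\oseins^{q'}f_k^{(j)}\|_{L^2}$ requires care because the free (arc/nested) contractions are combinatorially more rigid than the classical tensor contractions — the order of the variables matters, mirror symmetry must be tracked, and one must check that the relevant self-contraction index $q$ (resp. $q'$) is genuinely in the nontrivial range $(0,n_i)$ (resp. $(0,n_j)$) so that the one-dimensional hypothesis actually applies. A secondary bit of bookkeeping is verifying that in the moment expansion no ``diagonal/degenerate'' contraction survives that is neither a clean adjacent pairing nor controlled by a nontrivial contraction norm; this is handled by the traciality of $\varphi$ and the fact that $\varphi$ annihilates $I^S(h)$ for $h$ of positive order, so only full pairings contribute to the limit.
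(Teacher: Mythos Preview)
Your proposal is correct and follows essentially the same route as the paper: the paper's proof also reduces $(3)\Rightarrow(1)$ to showing that all nontrivial cross-contractions $f_k^{(i)}\overset{p}\frown f_k^{(j)}$ vanish in $L^2$, and does so via exactly the Cauchy--Schwarz identity you anticipate, namely $\|f_k^{(i)}\overset{p}\frown f_k^{(j)}\|^2=\langle f_k^{(i)}\overset{n_i-p}\frown f_k^{(i)},\, f_k^{(j)}\overset{n_j-p}\frown f_k^{(j)}\rangle$ (using mirror symmetry), together with the observation that at least one of $n_i-p$, $n_j-p$ lies in the nontrivial range. The only cosmetic difference is that the paper organizes the moment expansion via the pairing-integral formula and a decomposition of each $\pi\in NC_2(n_{i_1}\otimes\cdots\otimes n_{i_r})$ into connected components (packaging the vanishing for components on $\geq 3$ blocks as a separate proposition), rather than via iterated application of the product formula as you describe; the two viewpoints are equivalent.
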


\begin{remark} In the previous statement, the quantity $\varphi[I^S(f_k^{(i)})I^S(f_k^{(j)})]$ 
equals $\la f_k^{(i)},f_k^{(j)}\ra_{L^2(\RR_+^{n_i})}$ if $n_i = n_j$, and equals $ 0$ if $n_i\neq n_j$. In particular, 
the limit covariance matrix $c$ is necessarily such that $c(i,j) = 0$ whenever $n_i\neq n_j$.
\end{remark}

\begin{remark} Two additional references deal with non-semicircular limit theorems inside the free Wigner chaos. 
In \cite{NP-freepoisson}, one can find necessary and sufficient conditions for the convergence towards the so-called 
Mar$\check{\rm c}$enko-Pastur distribution (mirroring analogous findings in the classical setting -- see \cite{NP-gamma}). 
In \cite{deyanourdin}, conditions are established for the convergence towards the so-called `tetilla law' 
(or `symmetric Poisson distribution' -- see also \cite{NicaSpeicherDuke}).

\end{remark}

Combining the content of Theorem \ref{t:main} with those in \cite{KNPS,pectud}, we can finally state the following
Wiener-Wigner transfer principle, establishing an equivalence between multidimensional limit theorems on the classical and free chaoses.

\begin{theorem}\label{t:main2}
 Let $d\geq 1$ and $n_1,\dots,n_d$ be some fixed integers, and consider a positive definite symmetric matrix 
$c =\{c(i,j) : i,j=1,...,d\}$. Let $(N_1,\ldots,N_d)$ be a $d$-dimensional Gaussian vector 
and $(s_1,\dots,s_d)$ be a semicircular family, both with covariance $c$. 
For each $i=1,\dots,d$, we consider a sequence $(f_k^{(i)})_{k\in\NN}$ of fully-symmetric functions (cf. Definition
\ref{ss:mirror}) in 
$L^2(\RR^{n_i}_+)$.
Then:
\begin{enumerate}
\item For all $i,j=1,\ldots,d$ and as $k\to\infty$, $\varphi[I^S(f_k^{(i)})I^S(f_k^{(j)})] \to c(i,j)$
if and only if $E[I^W(f_k^{(i)})I^W(f_k^{(j)})] \to \sqrt{(n_i)!(n_j)!}\,c(i,j)$.
\item If the asymptotic relations in {\rm (1)} are verified then, as $k\to\infty$, 
\[
\big(I^S(f_k^{(1)}),\dots,I^S(f_k^{(d)})\big)\overset{\rm law}{\to}(s_1,\dots,s_d)
\]
if and only if
\[
\big(I^W(f_k^{(1)}),\dots,I^W(f_k^{(d)})\big)\overset{\rm law}{\to}\big(\sqrt{(n_i)!}N_1,\dots,\sqrt{(n_d)!}N_d\big).
\]
\end{enumerate}
\end{theorem}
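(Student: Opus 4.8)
\smallskip
\noindent\textbf{Strategy.} The plan is to treat the two assertions separately. Assertion~(1) will follow directly from the isometry formulas for multiple Wiener and Wigner integrals. For assertion~(2) I would proceed in three stages: on each side, first pass from joint convergence to componentwise convergence (invoking Theorem~\ref{t:main} on the free side and Part~(B) of Theorem~\ref{t:NPPT} on the classical side); then, on each side, pass from componentwise convergence to a one-dimensional fourth-moment condition (via statement~(3) of Theorem~\ref{t:main} and Part~(A) of Theorem~\ref{t:NPPT}); and finally match the free and classical fourth-moment conditions, coordinate by coordinate, using the one-dimensional Wiener-Wigner transfer established in \cite{KNPS}.

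\smallskip
\noindent\textbf{Assertion~(1).} Here I would simply recall that, for fully-symmetric (hence mirror-symmetric) functions, the Wigner isometry gives $\varphi[I^S(f_k^{(i)})I^S(f_k^{(j)})]=\langle f_k^{(i)},f_k^{(j)}\rangle_{L^2(\RR_+^{n_i})}$ when $n_i=n_j$ and $0$ otherwise, while the Wiener isometry gives $E[I^W(f_k^{(i)})I^W(f_k^{(j)})]=n_i!\,\langle f_k^{(i)},f_k^{(j)}\rangle_{L^2(\RR_+^{n_i})}$ when $n_i=n_j$ and $0$ otherwise. Since $\sqrt{(n_i)!(n_j)!}=n_i!$ whenever $n_i=n_j$, the two limiting statements in~(1) are literally the same; in particular, both force $c(i,j)=0$ as soon as $n_i\neq n_j$.

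\smallskip
\noindent\textbf{Assertion~(2).} Assume the asymptotic relations in~(1). By~(1) these are precisely hypothesis~\eqref{thm:assumption} of Theorem~\ref{t:main}, and they also give $E[I^W(f_k^{(i)})I^W(f_k^{(j)})]\to\sqrt{(n_i)!(n_j)!}\,c(i,j)$, which is the covariance of the vector $(\sqrt{(n_1)!}N_1,\dots,\sqrt{(n_d)!}N_d)$. By Theorem~\ref{t:main}, the free joint convergence is equivalent to: $\varphi[I^S(f_k^{(i)})^4]\to2\,c(i,i)^2$ for every $i$. On the classical side, $E[I^W(f_k^{(i)})^2]=n_i!\,\|f_k^{(i)}\|_{L^2}^2\to n_i!\,c(i,i)=\mathrm{Var}(\sqrt{(n_i)!}\,N_i)$, so Part~(B) of Theorem~\ref{t:NPPT} (the case $d=1$ being trivial) reduces the classical joint convergence to componentwise convergence, and Part~(A), together with the elementary computation for any first-chaos coordinate, makes the latter equivalent to: $E[I^W(f_k^{(i)})^4]\to3\,(n_i!)^2c(i,i)^2$ for every $i$. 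It remains to match the two families of fourth-moment conditions, coordinate by coordinate. Coordinates with $c(i,i)=0$ or $n_i=1$ are trivial, both conditions then holding automatically (by the equivalence of $L^2$ and $L^4$ norms on a fixed chaos when $c(i,i)=0$, since then $\|f_k^{(i)}\|_{L^2}\to0$; and by direct computation when $n_i=1$, a first chaos being exactly semicircular or Gaussian). For a coordinate with $c(i,i)>0$ and $n_i\ge2$, I would set $g_k^{(i)}:=f_k^{(i)}/\|f_k^{(i)}\|_{L^2}$ (fully symmetric, unit norm); by homogeneity of multiple integrals, $\varphi[I^S(f_k^{(i)})^4]=\|f_k^{(i)}\|_{L^2}^4\,\varphi[I^S(g_k^{(i)})^4]$ and $E[I^W(f_k^{(i)})^4]=\|f_k^{(i)}\|_{L^2}^4\,E[I^W(g_k^{(i)})^4]$, and since $\|f_k^{(i)}\|_{L^2}^2\to c(i,i)$ the two conditions become $\varphi[I^S(g_k^{(i)})^4]\to2$ and $E[I^W(g_k^{(i)})^4]\to3(n_i!)^2$; these are equivalent by the one-dimensional Wiener-Wigner transfer principle of \cite{KNPS}. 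Concatenating the equivalences yields~(2).

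\smallskip
\noindent\textbf{Main difficulty.} Granting Theorem~\ref{t:main} as the substantive new ingredient, and quoting Theorem~\ref{t:NPPT} and the one-dimensional transfer of \cite{KNPS}, the proof of Theorem~\ref{t:main2} is essentially an exercise in bookkeeping; the only points needing care are tracking the $\sqrt{(n_i)!}$ normalizations on the classical side and disposing of the degenerate ($c(i,i)=0$) and first-chaos ($n_i=1$) coordinates. The genuinely nontrivial fact underneath is the one-dimensional transfer itself: were one to reprove it rather than cite it, the heart of the matter would be the combinatorial comparison, valid precisely because the functions are \emph{fully} symmetric, showing that the nested free contractions $g_k\overset{p}{\frown}g_k$ and the classical contractions $g_k\otimes_p g_k$ (and their symmetrizations), $p=1,\dots,n-1$, have $L^2$-norms that are mutually bounded up to constants depending only on $n$. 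Combined with the closed-form expansions of $\varphi[I^S(g_k)^4]-2$ and $E[I^W(g_k)^4]-3(n!)^2$ as finite nonnegative combinations of these squared contraction norms, this would give the equivalence of the two fourth-moment conditions, and hence Theorem~\ref{t:main2}.
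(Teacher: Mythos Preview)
Your proposal is correct and follows essentially the same route as the paper: assertion~(1) via the two isometries, and assertion~(2) by reducing joint convergence to componentwise convergence on each side (Theorem~\ref{t:main} and Theorem~\ref{t:NPPT}{\bf (B)}) and then invoking the one-dimensional Wiener--Wigner transfer of \cite{KNPS}. The only cosmetic difference is that the paper chains the implications directly at the level of componentwise convergence in law (free joint $\Rightarrow$ free componentwise $\Rightarrow$ Wiener componentwise $\Rightarrow$ Wiener joint, and conversely), whereas you insert the extra step of translating componentwise convergence into the fourth-moment conditions before matching; your handling of the degenerate cases $c(i,i)=0$ and $n_i=1$ is more careful than the paper's, which simply cites \cite[Theorem~1.8]{KNPS} without isolating them.
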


The remainder of this paper is organized as follows. Section \ref{sec1} gives concise background and notation for the
free probability setting.
Theorems \ref{t:main} and \ref{t:main2} are then proved in Section \ref{s:proofs}.

\section{Relevant definitions and notations}\label{sec1}

We recall some relevant notions and definitions from free stochastic analysis. For more details, we refer the reader to 
\cite{BianeSpeicher, KNPS, NicaSpeicherBook}. 
\subsection{Free probability, free Brownian motion and stochastic integrals}\label{ss:free}

In this note, we consider as given a so-called {\it (tracial) $W^*$ probability space} $(\mathscr{A},\varphi)$, where 
$\mathscr{A}$ is a von Neumann algebra (with involution $X\mapsto X^*$), and $\varphi : \mathscr \to \,\mathbb{C}$ is a 
{\it tracial state} (or {\it trace}). In particular, $\varphi$ is weakly continuous, positive (that is, $\varphi(Y)\geq 0$ 
whenever $Y$ is a nonnegative element of $\mathscr{A}$), faithful (that is, $\varphi(YY^*)=0$ implies $Y = 0$, for every $Y\in \mathscr{A}$) and tracial (that is, $\varphi(XY) = \varphi(YX)$, for every $X,Y\in \mathscr{A}$). The self-adjoint elements of $\mathscr{A} $ are referred to as {\it random variables}. The {\it law} of a random variable $X$ is the unique Borel measure on $\mathbb{R}$ having the same moments as $X$ (see \cite[Proposition 3.13]{NicaSpeicherBook}). For $1\leq p \leq \infty $, one writes $L^p(\mathscr{A},\varphi)$ to indicate the $L^p$ space obtained as the completion of $\mathscr{A}$ with respect to the norm $\| a\|_p = \tau(|a|^p)^{1/p}$, where $ |a|= \sqrt{a^\ast a}$, and $\|\cdot\|_\infty$ stands for the operator norm.

\begin{definition}\label{d:freeness}
Let $\mathscr{A}_1,\ldots,\mathscr{A}_n$ be unital subalgebras of $\mathscr{A}$.  Let $X_1,\ldots, X_m$ be elements chosen from among the $\mathscr{A}_i$'s such that, for $1\le j<m$, $X_j$ and $X_{j+1}$ do not come from the same $\mathscr{A}_i$, and such that $\varphi(X_j)=0$ for each $j$.  The subalgebras $\mathscr{A}_1,\ldots,\mathscr{A}_n$ are said to be {\it free} or {\it freely independent} if, in this circumstance, $\varphi(X_1X_2\cdots X_n) = 0$.  Random variables are called freely independent if the unital algebras they generate are freely independent. 
\end{definition}

\begin{definition}\label{d:semicircular}
The (centered) {\it semicircular distribution} (or Wigner law) $S(0,t)$ is the probability distribution
\begin{equation} \label{eq semicircle} S(0,t)(dx) = \frac{1}{2\pi t} \sqrt{4t-x^2}\,dx, \quad |x|\le 2\sqrt{t}. \end{equation}
Being symmetric around $0$, the odd moments of this distribution are all $0$.  Simple calculations (see e.g. \cite[Lecture 2]{NicaSpeicherBook}) show that the even moments can be expressed in therms of the so-called {\it Catalan numbers}: for non-negative integers $m$,
\[ \int_{-2\sqrt{t}}^{2\sqrt{t}} x^{2m} S(0,t)(dx) = C_m t^m, \]
where $C_m = \frac{1}{m+1}\binom{2m}{m}$ is the $m$th Catalan number.  In particular, the second moment (and variance) is $t$ while the fourth moment is $2t^2$.
\end{definition}

\begin{definition}\label{d:freebm}
A {\it free Brownian motion} $S$ consists of: (i) a filtration $\{\mathscr{A}_t : t\geq 0\}$ of von Neumann sub-algebras of 
$\mathscr{A}$ (in particular, $\mathscr{A}_s \subset \mathscr{A}_t$, for $0\leq s<t$), (ii) a collection 
$S = \{S_t : t\geq 0\}$ of self-adjoint operators in $\mathscr{A}$ such that: (a) $S_0=0$ and $S_t\in\mathscr{A}_t$  
for every $t$, (b) for every $t$, $S_t$ has a semicircular distribution with mean zero and variance $t$, and (c)
for every $0\leq u<t$, the increment $S_t-S_u$ is free with respect to $\mathscr{A}_u$, and has a semicircular distribution with mean zero and variance $t-u$.

\end{definition}

For the rest of the paper, we consider that the $W^*$-probability space $(\mathscr{A}, \varphi)$ is endowed with a free 
Brownian motion $S$. For every integer $n\geq 1$, the collection of all operators having the form of a multiple integral 
$I^S(f)$, $f \in L^2(\mathbb{R}_+^n;\mathbb{C})= L^2(\mathbb{R}_+^n)$, is defined according to \cite[Section 5.3]{BianeSpeicher}, 
namely: (a) first define 
$I^S(f) = (S_{b_1} - S_{a_1})\cdots (S_{b_n} - S_{a_n})$ for every function $f$ having the form
\begin{equation}\label{e:simple}
f(t_1,...,t_n) = {\bf 1}_{(a_1,b_1)}(t_1)\times \ldots \times {\bf 1}_{(a_n,b_n)}(t_n),
\end{equation}
where the intervals $(a_i,b_i)$, $i=1,...,n$, are pairwise disjoint; (b) extend linearly the definition of $I^S(f)$ to 
`simple functions vanishing on diagonals', that is, to functions $f$ that are finite linear combinations of indicators of the 
type (\ref{e:simple}); (c) exploit the isometric relation
\begin{equation}\label{e:freeisometry}
\langle I^S(f),I^S(g) \rangle_{L^2(\mathscr{A},\ff )}=  \int_{\RR_+^n} f(t_1,\ldots,t_n)\overline{g(t_n,\ldots,t_1)}dt_1\ldots dt_n,
\end{equation}
where $f,g$ are simple functions vanishing on diagonals, and use a density argument to define $I(f)$ for a general $f\in  
L^2(\mathbb{R}_+^n)$.

\medskip

As recalled in the Introduction, for $n\geq 1$, the collection of all random variables of the type $I^S(f)$, 
$f\in L^2(\mathbb{R}_+^n)$, is called the $n$th {\it Wigner chaos} associated with $S$. One customarily writes $I^S(a) = a$ 
for every complex number $a$, that is, the Wigner chaos of order $0$ coincides with $\mathbb{C}$. Observe that 
(\ref{e:freeisometry}) together with the above sketched construction imply that, for every $n,m\geq 0$, and every 
$f\in L^2(\mathbb{R}_+^n)$, $g\in L^2(\mathbb{R}_+^m)$,
\begin{equation}\label{iso}
\varphi[I^S(f)I^S(g)] ={\bf 1}_{n=m} \times \int_{\RR_+^n} f(t_1,\ldots,t_n)\overline{g(t_n,\ldots,t_1)}dt_1\ldots dt_n,
\end{equation}
where the right hand side of the previous expression coincides by convention with the inner product in $L^2(\mathbb{R}_+^0) = \mathbb{C}$ whenever $m=n=0$.

\subsection{Mirror Symmetric Functions and Contractions}\label{ss:mirror}
\begin{definition} \label{def mirror} Let $n$ be a natural number, and let $f$ be a function in $L^2(\RR_+^n)$.
\begin{itemize}
\item[(1)] The {\it adjoint} of $f$ is the function $f^\ast(t_1,\ldots,t_n) = \overline{f(t_n,\ldots,t_1)}$.
\item[(2)] $f$ is called {\it mirror symmetric} if $f=f^\ast$, i.e.\ if \[f(t_1,\ldots,t_n) = \overline{f(t_n,\ldots,t_1)}\] for almost all $t_1,\ldots,t_n\ge0$ with respect to the product Lebesgue measure
\item[(3)] $f$ is called {\it fully symmetric} if it is real-valued and, for any permutation $\sigma$ in the symmetric group $\Sigma_n$, $f(t_1,\ldots,t_n) = f(t_{\sigma(1)},\ldots,t_{\sigma(n)})$ for almost every $t_1,\ldots,t_n\ge 0$ with respect to the product Lebesgue measure.
\end{itemize}
\end{definition}

An operator of the type $I^S(f)$ is self-adjoint if and only if $f$ is mirror symmetric.

\begin{definition} \label{def contraction}
Let $n,m$ be natural numbers, and let $f\in L^2(\RR^n_+)$ and $g\in L^2(\RR^m_+)$.  Let $p\le \min\{n,m\}$ be a natural number.  The {\it $p$th contraction} $f\overset{p}\frown g$ of $f$ and $g$ is the $L^2(\RR^{n+m-2p}_+)$ function defined by nested  integration of the middle $p$ variables in $f\tensor g$:
\begin{multline*}
 f\overset{p}\frown g\,(t_1,\ldots,t_{n+m-2p})\\ = \int_{\RR_+^{p}} f(t_1,\ldots,t_{n-p},s_1,\ldots,s_p)g(s_p,\ldots,s_1,t_{n-p+1},\ldots,t_{n+m-2p})\,ds_1\cdots ds_p. 
\end{multline*}
\end{definition}

\noindent Notice that when $p=0$, there is no integration, just the products of $f$ and $g$ with disjoint arguments; in other words, $f\overset{0}\frown g = f\tensor g$.

\subsection{Non-crossing Partitions}\label{ss:noncrossing}

A {\it partition} of $[n]=\{1,2,\dots,n\}$ is (as the name suggests) a collection of mutually disjoint nonempty subsets $B_1,$ $\ldots,B_r$ of $[n]$ such that $B_1\sqcup\cdots\sqcup B_r = [n]$.  The subsets are called the {\it blocks} of the partition. By convention we order the blocks by their least elements; i.e.\ $\min B_i < \min B_j$ iff $i<j$. If each block consists of two elements, then we call the partition a {\it pairing}. The set of all partitions on $[n]$ is denoted $\mathscr{P}(n)$, and the subset of all pairings is $\mathscr{P}_2(n)$.

\begin{definition} \label{def NC} Let $\pi\in\mathscr{P}(n)$ be a partition of $[n]$.  We say $\pi$ has a {\it crossing} if there are two distinct blocks $B_1,B_2$ in $\pi$ with elements $x_1,y_1\in B_1$ and $x_2,y_2\in B_2$ such that $x_1<x_2<y_1<y_2$.

\medskip

\noindent If $\pi\in\mathscr{P}(n)$ has no crossings, it is said to be a {\it non-crossing partition}.  The set of non-crossing partitions of $[n]$ is denoted $NC(n)$.  The subset of non-crossing pairings is denoted $NC_2(n)$.

\end{definition}

\begin{definition} \label{def respect} Let $n_1,\ldots,n_r$ be positive integers with $n=n_1+\cdots+n_r$.  The set $[n]$ is then partitioned accordingly as $[n] = B_1\sqcup\cdots\sqcup B_r$ where $B_1 = \{1,\ldots,n_1\}$, $B_2 = \{n_1+1,\ldots,n_1+n_2\}$, and so forth through $B_r = \{n_1+\cdots+n_{r-1}+1,\ldots,n_1+\cdots+n_r\}$.  Denote this partition as $n_1\tensor\cdots\tensor n_r$.

\medskip

\noindent We say that a pairing $\pi\in \mathscr{P}_2(n)$ {\it respects $n_1\tensor\cdots\tensor n_r$} if no block of $\pi$ contains more than one element from any given block of $n_1\tensor\cdots\tensor n_r$. The set of such respectful pairings is denoted $\mathscr{P}_2(n_1\tensor\cdots\tensor n_r)$.   The set of non-crossing pairings that respect $n_1\tensor\cdots\tensor n_r$ is denoted $NC_2(n_1\tensor\cdots\tensor n_r)$.
\end{definition}

\begin{definition} \label{def connected} Let $n_1,\ldots,n_r$ be positive integers, and let $\pi\in \mathscr{P}_2(n_1\tensor\cdots\tensor n_r)$.  Let $B_1$, $B_2$ be two blocks in $n_1\tensor\cdots\tensor n_r$.  Say that {\it $\pi$ links $B_1$ and $B_2$} if there is a block $\{i,j\}\in\pi$ such that $i\in B_1$ and $j\in B_2$.

\medskip

\noindent Define a graph $C_\pi$ whose vertices are the blocks of $n_1\tensor\cdots\tensor n_r$; $C_\pi$ has an edge between $B_1$ and $B_2$ iff $\pi$ links $B_1$ and $B_2$.  Say that {\it $\pi$ is connected} with respect to $n_1\tensor\cdots\tensor n_r$ (or that {\it $\pi$ connects the blocks of $n_1\tensor\cdots\tensor n_r$}) if the graph $C_\pi$ is connected. We shall denote by $NC_2^c(n_1\tensor\cdots\tensor n_r)$ the set of all non-crossing pairings that both respect and connect $n_1\tensor\cdots\tensor n_r$.
\end{definition}

\begin{definition} \label{def pairing integral} Let $n$ be an even integer, and let $\pi\in\mathscr{P}_2(n)$.  Let $f\colon \RR_+^n\to\CC$ be measurable. The {\it pairing integral} of $f$ with respect to $\pi$, denoted $\int_\pi f$, is defined (when it exists) to be the constant
\[ \int_\pi f = \int f(t_1,\ldots,t_n) \prod_{\{i,j\}\in\pi} \delta(t_i-t_j)\,dt_1\cdots dt_n. \]
\end{definition}

We finally introduce the notion of a semicircular family (see e.g. \cite[Definition 8.15]{NicaSpeicherBook}).

\begin{definition}\label{d:semifam} Let $d\geq 2$ be an integer, and let $c = \{c(i,j) : i,j=1,...,d\}$ be a positive definite symmetric matrix. A $d$-dimensional vector $(s_1,...,s_d)$ of random variables in $\mathscr{A}$ is said to be a {\it semicircular family with covariance} $c$ if for every $n\geq 1$ and every $(i_1,...,i_n)\in [d]^n$
\[
\varphi(s_{i_1}s_{i_2}\cdots s_{i_n}) = \sum_{\pi\in NC_2(n)}\prod_{\{a,b\}\in \pi} c(i_a,i_b).
\]
The previous relation implies in particular that, for every $i=1,...,d$, the random variable $s_i$ has the $S(0,c(i,i))$ distribution -- see Definition \ref{d:semicircular}.
\end{definition}

For instance, one can rephrase the defining property of the free Brownian motion $S=\{S_t : t\geq 0\}$ by saying that, for every $t_1<t_2<\cdots < t_d$, the vector $(S_{t_1},S_{t_2}-S_{t_1},...,S_{t_d}-S_{t_{d-1}})$ is a semicircular family with a diagonal covariance matrix such that $c(i,i) = t_i - t_{i-1}$ (with $t_0=0$), $i=1,...,d$.

\section{Proof of the main results}\label{s:proofs}
A crucial ingredient in the proof of Theorem \ref{t:main} is the following statement, showing that contractions control all important pairing integrals. This is the generalization of Proposition 2.2. in \cite{KNPS} to
our situation.

\begin{proposition}\label{prop:vanishingintegrals}
Let $d\geq 2$ and $n_1,\dots,n_d$ be some fixed positive integers.
Consider, for each $i=1,\dots,d$, sequences of mirror-symmetric functions $(f_k^{(i)})_{k\in\NN}$  
with $f_k^{(i)}\in L^2(\RR^{n_i}_+)$, satisfying:
\begin{itemize}
 \item 
There is a constant $M>0$ such that $\Vert f_k^{(i)}\Vert_{L^2(\RR^{n_i}_+)}\leq M$
for all $k\in\NN$ and all $i=1,\dots,d$.
\item
For all $i=1,\dots, d$ and all  $p=1,\dots,n_i-1$,
$$\lim_{k\to\infty} f_k^{(i)}\overset{p}\frown f_k^{(i)}=0\qquad \text{in}\quad L^2(\RR_+^{2n_i-2p}).$$

\end{itemize}

Let $r\geq3$, and let $\pi$ be a connected non-crossing pairing that respects
$n_{i_1}\otimes\cdots\otimes n_{i_r}$: $\pi\in NC_2^c(n_{i_1}\otimes\cdots\otimes n_{i_r})$.
Then
$$\lim_{k\to\infty} \int_\pi f_k^{(i_1)}\otimes\cdots\otimes
f_k^{(i_r)}=0.$$

\end{proposition}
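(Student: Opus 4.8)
The plan is to reduce the general connected pairing integral to a chain of contractions, and then invoke the hypothesis that self-contractions vanish. First I would set up notation: write $f_k^{(i_\ell)} = g_\ell$ for $\ell = 1,\dots,r$ (suppressing $k$), so that $\int_\pi g_1 \otimes \cdots \otimes g_r$ is an iterated integral over $\RR_+^{n_{i_1}+\cdots+n_{i_r}}$ with $\delta$-constraints given by $\pi$, which is a non-crossing pairing respecting the block structure $n_{i_1}\otimes\cdots\otimes n_{i_r}$ and connecting all $r$ blocks. The key combinatorial observation is that a connected non-crossing pairing on $r \geq 3$ blocks must contain a pair of \emph{adjacent} blocks $B_\ell, B_{\ell+1}$ (cyclically, or more carefully: some block that is linked only to one of its cyclic neighbors in a "leaf"-like fashion) — more precisely, since $C_\pi$ is a connected graph on $r$ vertices and $\pi$ is non-crossing, one can always find two blocks that are joined to each other by exactly $p \geq 1$ of the $\pi$-pairs in a way that performing those $p$ integrations realizes exactly a contraction $g_\ell \overset{p}\frown g_{\ell+1}$ of two of the functions. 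The non-crossing condition is what guarantees the $p$ paired indices are \emph{contiguous} at the interface of the two blocks, which is exactly the structure of Definition \ref{def contraction}.

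Next I would make this precise by an induction on $r$. For the base step one checks directly that when $r=3$, every $\pi \in NC_2^c(n_{i_1}\otimes n_{i_2}\otimes n_{i_3})$ pairs each block with both of the others, and the non-crossing constraint forces the pairings between, say, $B_1$ and $B_2$ to use a contiguous block of indices, so that the full integral factors as $\int_{\pi'} (g_1 \overset{p}\frown g_2) \otimes g_3$ for an appropriate non-crossing pairing $\pi'$ on the reduced block structure. Using Cauchy--Schwarz on the pairing integral together with the uniform bound $\|g_\ell\| \leq M$, one bounds $|\int_\pi g_1\otimes\cdots\otimes g_r|$ by $M^{r-2}\,\|g_\ell \overset{p}\frown g_{\ell+1}\|_{L^2}$ (possibly after renormalizing the reduced pairing integral as an $L^2$ inner product — here one must be slightly careful that the reduced pairing $\pi'$ still respects and connects the reduced block structure, which it does). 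If $\ell, \ell+1$ are two of the \emph{same} function, i.e. $i_\ell = i_{\ell+1}$, this is a self-contraction and vanishes by hypothesis; if $i_\ell \neq i_{\ell+1}$ one uses the polarized estimate $\|g_\ell \overset{p}\frown g_{\ell+1}\| \leq \|g_\ell \overset{p}\frown g_\ell\|^{1/2}\|g_{\ell+1}\overset{p}\frown g_{\ell+1}\|^{1/2}$ (a standard consequence of the definition of contraction and Cauchy--Schwarz, as used in \cite{KNPS}) to again reduce to self-contractions, and also one must handle the possibility $p = n_{i_\ell}$, i.e. the whole block $B_\ell$ is absorbed, which genuinely reduces $r$ by one and is where the induction hypothesis enters. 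In every case the bound $\to 0$.

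The main obstacle I expect is the combinatorial lemma underlying the reduction: showing that a connected non-crossing pairing on $r \geq 3$ interval-blocks always admits a pair of cyclically adjacent blocks whose mutual pairings are realizable as a contraction (contiguous interface), so that contracting them yields again a connected non-crossing pairing on $r-1$ blocks (or on $r$ blocks with two merged, with one fewer after relabeling). One has to be careful because "adjacent" should be understood cyclically and because the removed block might be linked to more than one other block; the cleanest route is probably to locate an \emph{inner} block of the non-crossing structure — by non-crossingness there is a block all of whose $\pi$-pairs go "outward" to a single neighboring block — and contract along that interface. I would also need to track that the uniform $L^2$ bound is preserved under contraction (it is: $\|g \overset{p}\frown h\| \leq \|g\|\,\|h\|$ by Cauchy--Schwarz), so that the induction hypothesis applies to the reduced family with the same constant $M$. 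Once the combinatorial lemma is in place, the analytic estimates are routine applications of Cauchy--Schwarz and the hypotheses, exactly mirroring Proposition 2.2 of \cite{KNPS}.
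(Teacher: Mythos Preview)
Your overall strategy matches the paper's: locate two (cyclically) adjacent blocks that are linked, realize the interface as a nontrivial contraction $f_k^{(i_1)}\overset{p}\frown f_k^{(i_2)}$ with $0<2p<n_{i_1}+n_{i_2}$, bound the remaining pairing integral by $M^{r-2}$ via Lemma 2.1 of \cite{KNPS}, and then reduce the mixed contraction norm to self-contractions. However, two points in your execution need correction.

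First, your ``polarized estimate'' is stated with the wrong contraction orders. The identity actually used (and this is the crux of the paper's argument) is, for mirror-symmetric $f,g$,
\[
\Vert f\overset{p}\frown g\Vert^2 \;=\; \big\langle f\overset{\,n_{i_1}-p}\frown f,\; g\overset{\,n_{i_2}-p}\frown g\big\rangle_{L^2(\RR_+^{2p})},
\]
not a bound involving $f\overset{p}\frown f$ and $g\overset{p}\frown g$. The point of the correct version is that the constraint $0<2p<n_{i_1}+n_{i_2}$ forces \emph{at least one} of $n_{i_1}-p$, $n_{i_2}-p$ to lie strictly between $0$ and the corresponding $n_{i_j}$, so one of the two self-contractions on the right is nontrivial and tends to $0$ by hypothesis. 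This single observation handles \emph{every} case---including $p=\min(n_{i_1},n_{i_2})$---in one stroke, with no induction on $r$.

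Second, and relatedly, your proposed induction to handle the ``full absorption'' case $p=n_{i_\ell}$ has a gap: after merging, the new function $g_\ell\overset{p}\frown g_{\ell+1}$ is in general neither mirror-symmetric nor one of the original $f_k^{(i)}$, so the hypothesis on vanishing self-contractions is not available for it, and the induction hypothesis as you have set it up does not apply to the reduced configuration. The paper avoids this entirely because the identity above keeps both factors as self-contractions of the \emph{original} kernels. Once you replace your polarized estimate by the correct one, the induction becomes unnecessary and the argument collapses to the paper's one-step proof.
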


\begin{proof}
In the same way as in \cite{KNPS} one sees that without restriction (i.e., up to a cyclic rotation and relabeling of the indices) one can
assume that
$$\int_\pi f_k^{(i_1)}\otimes\cdots\otimes
f_k^{(i_r)}=\int_{\pi'} (f_k^{(i_1)}\overset{p} \frown f_k^{(i_2)})\otimes
(f_k^{(i_3)}\otimes\cdots\otimes
f_k^{(i_r)}),
$$
where $0< 2p< n_{i_1}+n_{i_2}$ and
$$\pi'\in 
NC_2^c\bigl((n_{i_1}+n_{i_2}-2p)\otimes n_{i_3}\otimes\cdots\otimes n_{i_r}\bigr).$$
Note that
$0< 2p< n_{i_1}+n_{i_2}$ says that
$f_k^{(i_1)}\overset{p} \frown f_k^{(i_2)}$ is not a trivial contraction (trivial means that either nothing or all arguments are contracted); of course, in the case $n_{i_1}\not=n_{i_2}$ it is allowed that $p=\min(n_{i_1},n_{i_2})$.

By Lemma 2.1. of \cite{KNPS} we have then
\begin{align*}
\vert\int_\pi &f_k^{(i_1)}\otimes\cdots\otimes
f_k^{(i_r)}\vert\\
&\leq
\Vert f_k^{(i_1)}\overset{p} \frown f_k^{(i_2)}\Vert_{L^2(\RR_+^{n_{i_1}+n_{i_2}-2p})}\cdot
\Vert f_k^{(i_3)}\Vert_{L^2(\RR_+^{n_{i_3}})}\cdots
\Vert f_k^{(i_r)}\Vert_{L^2(\RR_+^{n_{i_r}})}\\
&\leq \Vert f_k^{(i_1)}\overset{p} \frown f_k^{(i_2)}\Vert_{L^2(\RR_+^{n_{i_1}+n_{i_2}-2p})}\cdot M^{r-2}.
\end{align*}
Now we only have to observe that, by also using the mirror symmetry of
$f_k^{(i_1)}$ and $f_k^{(i_2)}$, we have
\begin{align*}
&\Vert f_k^{(i_1)}\overset{p} \frown f_k^{(i_2)}\Vert_{L^2(\RR_+^{n_{i_1}+n_{i_2}-2p})}^2
=\left\la f_k^{(i_1)}\overset{n_{i_1}-p}\frown f_k^{(i_1)}, f_k^{(i_2)}\overset{n_{i_2}-p}\frown
f_k^{(i_2)}\right\ra_{L^2(\RR_+^{2p})}\\
&\leq
\Vert
f_k^{(i_1)}\overset{n_{i_1}-p}\frown f_k^{(i_1)}\Vert_{L^2(\RR_+^{2p})}\cdot
\Vert
 f_k^{(i_2)}\overset{n_{i_2}-p}\frown
f_k^{(i_2)}\Vert_{L^2(\RR_+^{2p})}.
\end{align*}
According to our assumption we have, for each $i=1,\dots,d$ and each $q=1,\dots,n_i-1$, that
$$\lim_{k\to\infty} f_k^{(i)}\overset{q}\frown f_k^{(i)} =0\qquad\text{in}
\qquad L^2(\RR_+^{2n_i-2q}).$$
Since now at least one of the two contractions $\overset{n_{i_1}-p}\frown$ and
$\overset{n_{i_2}-p}\frown$ is non-trivial, we can choose either $q=n_{i_1}-p$, $i=i_1$ or $q=n_{i_2}-p$, $i=i_2$ in the above, and this implies that
$$\lim_{k\to\infty}\Vert f_k^{(i_1)}\overset{p} \frown f_k^{(i_2)}\Vert_{L^2(\RR_+^{n_{i_1}+n_{i_2}-2p})}=0,$$
which gives our claim.
\end{proof}

We can now provide a complete proof of Theorem \ref{t:main}.

\begin{proof}[Proof of Theorem \ref{t:main}]

The equivalence between (2) and (3) follows from \cite{KNPS}. Clearly, (1) implies (3), so we only have the prove the reverse implication. So let us assume (3). Note that,
by Theorem 1.6 of \cite{KNPS}, this is equivalent to the fact that all non-trivial contractions of $f_k^{(i)}$ 
converge to 0; i.e., for each $i=1,\dots,d$ and each $q=1,\dots,n_i-1$ we have
\begin{equation}\label{eq:contractionsgotozero}
\lim_{k\to\infty} f_k^{(i)}\overset{q}\frown f_k^{(i)} =0\qquad\text{in}
\qquad L^2(\RR_+^{2n_i-2q}).
\end{equation} 
We will use statement (3) in this form. In order to show (1), we have to show that any moment in the variables 
$I(f_k^{(1)}),\dots,I(f_k^{(d)})$ converges, as $k\to\infty$, to the corresponding moment in the semicircular variables 
$s_1,\dots,s_d$. So, for $r\in\NN$ and positive integers $i_1,\dots,i_r$, we consider the moments
$$\varphi\left[I^S(f_k^{(i_1)})\cdots I^S(f_k^{(i_r)})\right].$$
We have to show that they converge, for $k\to\infty$, to the corresponding moment
$\ff(s_{i_1}\cdots s_{i_r})$.
Note that our assumption \eqref{thm:assumption} says that
$$\lim_{k \to\infty}\varphi[I^S(f_k^{(i)})I^S(f_k^{(j)})]=c(i,j)=\ff(s_is_j).$$
By Proposition 1.38 in \cite{KNPS} we have
$$\varphi\left[I^S(f_k^{(i_1)})\cdots I^S(f_k^{(i_r)})\right]=\sum_{\pi\in NC_2(n_{i_1}\otimes\cdots\otimes n_{i_r})}\int_\pi f_k^{(i_1)}\otimes\cdots\otimes
f_k^{(i_r)}.$$

By Remark 1.33 in \cite{KNPS}, any $\pi\in NC_2(n_{i_1}\otimes\cdots\otimes n_{i_r})$ can be uniquely decomposed into a disjoint union of connected pairings $\pi=\pi_1\sqcup\cdots \sqcup\pi_m$ with $\pi_q\in NC_2^c(\bigotimes_{j\in I_q} n_{i_j})$, where $\{1,\dots,r\}=I_1\sqcup\dots\sqcup I_m$ is a partition of the index set $\{ 1,\dots,r\}$. The above integral with respect to $\pi$ factors then accordingly
into
$$\int_\pi f_k^{(i_1)}\otimes\cdots\otimes
f_k^{(i_r)}=\prod_{q=1}^m \int_{\pi_q} \text{$\bigotimes_{j\in I_q} f_k^{(i_j)}$}.$$

Consider now one of those factors, corresponding to $\pi_q$. Since $\pi_q$
must respect $\bigotimes_{j\in I_q} n_{i_j}$, the number $r_q:=\#I_q$ must be strictly greater than 1. On the other hand, if $r_q\geq 3$, then, from \eqref{eq:contractionsgotozero} and Proposition \ref{prop:vanishingintegrals}, it follows that the corresponding pairing integral  
$\int_{\pi_q} \text{$\bigotimes_{j\in I_q} f_k^{(i_j)}$}$ converges to 0 in $L^2$.
Thus, in the limit, only those $\pi$ make a contribution, for which all $r_q$ are equal to 2, i.e., where each of the $\pi_q$ in the decomposition of $\pi$ 
corresponds to
a complete contraction between two of the appearing functions.
Let $NC_2^2(n_{i_1}\otimes\cdots\otimes n_{i_r})$ denote the set of those pairings $\pi$.
So we get
$$\lim_{k\to\infty}\varphi\left[I(f_k^{(i_1)})\cdots I(f_k^{(i_r)})\right]=
\sum_{\pi\in NC_2^2(n_{i_1}\otimes\cdots\otimes n_{i_r})} \lim_{k\to\infty}\int_\pi
f_k^{(i_1)}\otimes\cdots\otimes f_k^{(i_r)},$$
We continue as in \cite{KNPS}: each $\pi\in NC_2^2(n_{i_1}\otimes\cdots\otimes n_{i_r})$ is in bijection with a non-crossing pairing $\sigma\in NC_2(r)$. The contribution of such a $\pi$ is the product of the complete contractions for each pair of the corresponding $\sigma\in NC_2(r)$; but the complete contraction is just the $L^2$ inner product between the paired functions, i.e.,
$$\lim_{k\to\infty}\varphi\left[I^S(f_k^{(i_1)})\cdots I^S(f_k^{(i_r)})\right]=
\sum_{\sigma\in NC_2(r)} \prod_{\{s,t\}\in\sigma} c(i_s,i_t).
$$
This is exactly the moment
$\ff(s_{i_1}\cdots s_{i_r})$
of a semicircular family $(s_1,\dots,s_d)$ with covariance matrix $c$, and the proof is concluded.
\end{proof}

We conclude this paper with the proof of Theorem \ref{t:main2}.\\

\noindent
{\it Proof of Theorem \ref{t:main2}}. Point (1) is a simple consequence of the Wigner isometry (\ref{iso})
(since each $f_k^{(i)}$ is fully symmetric, $f_k^{(i)}$ is in particular mirror-symmetric),
together with the classical Wiener isometry which states that
\begin{equation}\label{iso}
E[I^W(f)I^W(g)] ={\bf 1}_{n=m} \times n!\langle f,g\rangle_{L^2(\RR_+^n)}
\end{equation}
for every $n,m\geq 0$, and every 
$f\in L^2(\mathbb{R}_+^n)$, $g\in L^2(\mathbb{R}_+^m)$.
For point (2), we observe first that the case $d=1$ is already known, as it corresponds to \cite[Theorem 1.8]{KNPS}.
Consider now the case $d\geq 2$.
Let us suppose that $\big(I^S(f_k^{(1)}),\dots,I^S(f_k^{(d)})\big)\overset{\rm law}{\to}(s_1,\dots,s_d)$.
In particular, $I^S(f_k^{(i)})\overset{\rm law}{\to}s_i$ for all $i=1,\ldots,d$. By 
\cite[Theorem 1.8]{KNPS} (case $d=1$), this implies that $I^W(f_k^{(i)})\overset{\rm law}{\to}\sqrt{(n_i)!}N_i$.
Since the asymptotic relations in (1) are verified, Theorem \ref{t:NPPT}{\bf (B)} leads then to
$\big(I^W(f_k^{(1)}),\dots,I^W(f_k^{(d)})\big)\overset{\rm law}{\to}\big(\sqrt{(n_i)!}N_1,\dots,\sqrt{(n_d)!}N_d\big)$,
which is the desired conclusion.
The converse implication follows exactly the same lines,and the proof is concluded.\qed

\end{document}